\theoremstyle{plain}
\newtheorem{theorem}{Theorem}
\newtheorem*{theorem*}{Theorem}
\newtheorem{corollary}{Corollary}
\newtheorem{lemma}{Lemma}
\theoremstyle{definition}
\begin{document}
\title{An inequality in mixed multiplicities}
\author{Suprajo Das}
\address{Suprajo Das, Chennai Mathematical Institute, Kelambakkam 603103, India}
\email{dassuprajo@gmail.com}
\begin{abstract}
    The theory of mixed multiplicities of (not necessarily Noetherian) filtrations of $m_R$-primary ideals in a Noetherian local ring $R$, has been developed by Cutkosky, Sarkar and Srinivasan. The objective of this article is to generalise a Minkowski type inequality given in their paper. We also recover a result of Cutkosky, Srinivasan and Verma as a simple consequence of our inequality.
\end{abstract}
\maketitle
\section{introduction}
The concept of mixed multiplicities of ideals is of remarkable importance in the areas of commutative algebra and algebraic geometry. A survey of the theory of mixed multiplicities of ideals, with proofs, can be found in \cite{TV} and chapter $17$ of \cite{Hun}. Recently, the study of mixed multiplicities of (not necessarily Noetherian) filtrations of $m_R$-primary ideals in a Noetherian local ring $R$, has been pioneered by Cutkosky, Sarkar and Srinivasan in \cite{Cut}. This generalises the classical theory of mixed multiplicities of ideals. The interested reader is requested to refer to \cite{Cut} for notations and definitions. Among many other things, the authors prove the following result.
\begin{theorem*}[Theorem $6.3.$ \cite{Cut}]
Suppose that $(R,m_R)$ is a Noetherian local ring of dimension $d\geq 1$ such that $\dim N(\hat{R})<d$, $M$ is a finitely generated $R$-module and $\mathcal{I}(1) = \{I(1)_j\}$ and $\mathcal{I}(2) = \{I(2)_j\}$ are filtrations of $R$ by $m_R$-primary ideals. Then for all $0\leq i\leq d$, $$e\left(\mathcal{I}(1)^{[d-i]},\mathcal{I}(2)^{[i]};M\right)^d \leq e\left(\mathcal{I}(1);M\right)^{d-i}e\left(\mathcal{I}(2);M\right)^i.$$
\end{theorem*}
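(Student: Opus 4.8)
The plan is to reduce the stated product inequality to the corresponding Teissier-type log-convexity inequalities, and then to establish the latter for filtrations by comparison with the Noetherian (ideal-adic) case. Write $e_i := e(\mathcal{I}(1)^{[d-i]},\mathcal{I}(2)^{[i]};M)$ for $0\le i\le d$, so that $e_0 = e(\mathcal{I}(1);M)$ and $e_d = e(\mathcal{I}(2);M)$. I claim the theorem follows formally once we know that (i) each $e_i$ is positive and (ii) the sequence is log-convex, i.e. $e_i^2\le e_{i-1}e_{i+1}$ for $1\le i\le d-1$. Indeed, setting $f_i=\log e_i$, condition (ii) says $f_{i+1}-2f_i+f_{i-1}\ge 0$, so $f$ is convex on $\{0,\dots,d\}$ and hence lies below the chord joining its endpoints: $f_i\le \tfrac{d-i}{d}f_0+\tfrac{i}{d}f_d$. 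Exponentiating and raising to the $d$-th power gives exactly $e_i^d\le e_0^{d-i}e_d^i$. Positivity (i) is what permits taking logarithms, and it is available from the positivity theory for mixed multiplicities of filtrations of $m_R$-primary ideals under the hypothesis $\dim N(\hat R)<d$.

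So the real content is the log-convexity (ii). My first reduction would be to the case of a complete local domain. Multiplicities are unchanged under completion, so I may assume $R=\hat R$. Using additivity of mixed multiplicities over the minimal primes $\mathfrak p$ of $R$ of maximal dimension --- the hypothesis $\dim N(\hat R)<d$ guaranteeing that only the $d$-dimensional primes contribute --- I can write $e_i(M)=\sum_{\dim R/\mathfrak p = d}\ell_{R_{\mathfrak p}}(M_{\mathfrak p})\,e_i(R/\mathfrak p)$. A pleasant point here is that although the product inequality is not additive, log-convexity \emph{is} preserved under (positively scaled) sums of sequences: if $(a_i)$ and $(b_i)$ are log-convex then, by AM--GM, $a_{i-1}b_{i+1}+a_{i+1}b_{i-1}\ge 2\sqrt{a_{i-1}a_{i+1}b_{i-1}b_{i+1}}\ge 2a_ib_i$, which is precisely what is needed to conclude that $(a_i+b_i)$ is log-convex. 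Hence it suffices to prove log-convexity of $(e_i(R/\mathfrak p))$ for each maximal-dimensional prime, i.e. to treat the case where $R$ is a complete local domain of dimension $d$ and $M=R$.

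For the domain case I would pass from the filtrations to their defining ideals and take a limit. The key input I would invoke from the theory of \cite{Cut} is a comparison statement of the shape $e_i(\mathcal{I}(1),\mathcal{I}(2)) = \lim_{k\to\infty} k^{-d}\,e\big(I(1)_k^{[d-i]}, I(2)_k^{[i]}\big)$, expressing the mixed multiplicity of the filtrations as the suitably normalised limit of ordinary mixed multiplicities of the $m_R$-primary ideals $I(1)_k,I(2)_k$. Granting this, the classical Teissier inequality for the pair $I(1)_k,I(2)_k$ gives $e(I(1)_k^{[d-i]},I(2)_k^{[i]})^2 \le e(I(1)_k^{[d-i+1]},I(2)_k^{[i-1]})\,e(I(1)_k^{[d-i-1]},I(2)_k^{[i+1]})$ for every $k$; dividing by $k^{2d}$ and letting $k\to\infty$ yields $e_i^2\le e_{i-1}e_{i+1}$, as required.

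The main obstacle, as I see it, is precisely this comparison/limit step in the domain case: unlike in the Noetherian setting there is no finitely generated graded algebra to work with, so one cannot argue through an associated Rees algebra, and one must instead control the gap between the filtration and its adic approximations uniformly in $k$. I would establish the limit formula through the volume-theoretic description of these multiplicities --- realising $\ell(R/I_k)$ and its mixed analogues asymptotically as lattice-point counts in regions cut out by an Okounkov-type valuation, so that the multiplicities become volumes (and mixed volumes) of convex bodies --- and then checking that the rescaled ideal multiplicities converge to these volumes. An alternative, should the direct comparison prove delicate, is to argue entirely within the convex-geometric model, deducing log-convexity of $(e_i)$ from a Brunn--Minkowski-type inequality for the associated bodies; here one must be careful that the correct normalisation reverses the usual concavity and produces log-convexity, which is consistent with the behaviour seen in elementary monomial examples.
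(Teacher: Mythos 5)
Your step (i) is false: mixed multiplicities of filtrations of $m_R$-primary ideals need not be positive, and there is no ``positivity theory'' to invoke. The hypothesis $\dim N(\hat{R})<d$ guarantees only that the limits defining these multiplicities \emph{exist}, not that they are nonzero. For instance, $I_j=m_R^{\lceil\sqrt{j}\,\rceil}$ is a filtration of $m_R$-primary ideals with $l_R(R/I_j)$ growing like $j^{d/2}$, so $e(\mathcal{I};R)=\lim_j d!\,l_R(R/I_j)/j^d=0$; indeed, the case $e(\mathcal{I}(j);M)=0$ is exactly what the application in Section 3 of this paper is about. This particular gap is repairable without logarithms: from $e_i^2\le e_{i-1}e_{i+1}$ for $1\le i\le d-1$ one checks that if any term of the sequence vanishes then the zeros propagate through all the middle terms, i.e.\ $e_1=\cdots=e_{d-1}=0$, after which $e_i^d\le e_0^{d-i}e_d^i$ is trivial; but you must make that argument rather than assert positivity. (Your reduction to a complete domain via the associativity formula, and the observation that log-convexity is preserved under sums, are both fine and mirror the reductions in \cite{Cut}.)

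The serious gap is the comparison formula on which your entire domain case rests, namely $e\bigl(\mathcal{I}(1)^{[d-i]},\mathcal{I}(2)^{[i]};R\bigr)=\lim_{k\to\infty}k^{-d}\,e\bigl(I(1)_k^{[d-i]},I(2)_k^{[i]};R\bigr)$. This is not a statement you can quote from \cite{Cut}: only the extreme cases $i\in\{0,d\}$ are covered by existing ``volume equals multiplicity'' theorems, and the mixed version is itself a theorem of comparable depth to the one you are proving. Your sketch (Okounkov-type valuations, lattice-point counting, and a reversed Alexandrov--Fenchel inequality for the resulting coconvex regions) is a research program, not a proof, and the reversal of concavity you mention is a genuine difficulty rather than a bookkeeping issue. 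The paper sidesteps this analysis entirely: after reducing to an analytically irreducible domain via Theorem 6.1 of \cite{Cut}, it replaces each $\mathcal{I}(i)$ by its $a$-th truncation $\mathcal{I}_a(i)$, which is Noetherian, so that $I_a(i)_{f_at}=\bigl(I_a(i)_{f_a}\bigr)^t$ for a single suitable $f_a$ and all $t$. The classical ideal inequality (Corollary \ref{ineq3}) then applies to the ideals $J_a(i)_1=I_a(i)_{f_a}$, the Alon--Tarsi lemma (Lemma \ref{null}) upgrades the equality of length limits to the exact identity $e\bigl(\mathcal{I}_a(1)^{[d_1]},\ldots,\mathcal{I}_a(r)^{[d_r]};R\bigr)=f_a^{-d}\,e\bigl(J_a(1)_1^{[d_1]},\ldots,J_a(r)_1^{[d_r]};R\bigr)$, and Proposition 6.2 of \cite{Cut} passes the inequality to the limit $a\to\infty$. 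In other words, the comparison you propose to establish analytically over all $k$ is replaced by an identity at one Noetherian approximation, and no positivity is ever needed because weak inequalities survive limits.
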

In this manuscript we generalize the above inequality. More specifically we prove the following theorem.
\begin{theorem*}[Theorem \ref{mainthm}]
Suppose that $(R,m_R)$ is a Noetherian local ring of dimension $d\geq 1$ such that $\dim N(\hat{R})<d$, $M$ is a finitely generated $R$-module and $\mathcal{I}(1) = \{I(1)_j\}, \ldots, \mathcal{I}(r) = \{I(r)_j\}$ are filtrations of $R$ by $m_R$-primary ideals. Then $$e\left(\mathcal{I}(1)^{[d_1]},\ldots,\mathcal{I}(r)^{[d_r]};M\right)^d \leq \prod\limits_{i=1}^r e\left(\mathcal{I}(i);M\right)^{d_i}$$ for any $d_1,\ldots,d_r \in\mathbb{N}$ satisfying $d_1+\cdots+d_r=d$. 
\end{theorem*}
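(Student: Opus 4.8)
The plan is to prove the inequality by induction on the number $s$ of filtrations that occur with positive exponent, i.e. $s=\#\{i:d_i>0\}$. Throughout I would write $\mathbf d=(d_1,\dots,d_r)$, abbreviate $e(\mathbf d):=e(\mathcal I(1)^{[d_1]},\dots,\mathcal I(r)^{[d_r]};M)$, and set $e_i:=e(\mathcal I(i);M)$. When $s\le 2$ the assertion is exactly Theorem $6.3$ of \cite{Cut} (and is trivial when $s=1$), so this supplies the base case; all the content is in the passage from $s-1$ to $s$.

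For the inductive step I would first isolate a \emph{relative} two--filtration inequality, to play the role that Theorem $6.3$ plays in the binary case. Concretely, for any two indices $i\neq j$ and any choice of exponents $\{d_k\}_{k\neq i,j}$ for the remaining filtrations --- collected into a spectator datum $\mathcal A=(\mathcal I(k)^{[d_k]})_{k\neq i,j}$ of total degree $d-t$, where $t=d_i+d_j$ --- I claim
\[ e\big(\mathcal A;\mathcal I(i)^{[d_i]},\mathcal I(j)^{[d_j]}\big)^{t}\ \le\ e\big(\mathcal A;\mathcal I(i)^{[t]}\big)^{d_i}\,e\big(\mathcal A;\mathcal I(j)^{[t]}\big)^{d_j}. \]
This is the Minkowski inequality for the two filtrations $\mathcal I(i),\mathcal I(j)$ carried out relative to the fixed spectators $\mathcal A$; taking $\mathcal A$ empty and $t=d$ recovers Theorem $6.3$ verbatim.

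Granting this, the inductive step is a short computation. Since $s\ge 3$ I may pick $i=1,j=2$ with $d_1,d_2\ge 1$, and set $A=e(\mathcal A;\mathcal I(1)^{[t]})$, $B=e(\mathcal A;\mathcal I(2)^{[t]})$ with $t=d_1+d_2$. The relative inequality gives $e(\mathbf d)^{t}\le A^{d_1}B^{d_2}$. Now $A$ and $B$ are mixed multiplicities in which the filtration $\mathcal I(2)$ (resp. $\mathcal I(1)$) no longer occurs, so each involves only $s-1$ filtrations with positive exponent; the induction hypothesis therefore yields $A^{d}\le e_1^{\,t}\prod_{k\ge 3}e_k^{d_k}$ and $B^{d}\le e_2^{\,t}\prod_{k\ge 3}e_k^{d_k}$. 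Combining these,
\[ e(\mathbf d)^{td}\le (A^{d})^{d_1}(B^{d})^{d_2}\le\Big(\textstyle\prod_{i=1}^r e_i^{d_i}\Big)^{t}, \]
and extracting $t$-th roots gives $e(\mathbf d)^{d}\le\prod_i e_i^{d_i}$, as wanted. Specialising all $\mathcal I(i)$ to $I_i$-adic filtrations then recovers the Cutkosky--Srinivasan--Verma inequality at once.

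The genuine obstacle is thus the displayed relative inequality, since --- as one already checks for $r=2$ --- the endpoint form of Theorem $6.3$ with no spectators is strictly weaker than the local three-term form and does not formally imply its relative counterpart. I would establish it by revisiting the method behind Theorem $6.3$ in \cite{Cut} and running it relative to $\mathcal A$: under the hypothesis $\dim N(\hat R)<d$ the relevant colengths $\ell_R(M/(\cdots)M)$ converge to volumes of Newton--Okounkov type bodies, and the two--parameter Brunn--Minkowski/convexity estimate that produces the binary Minkowski inequality should go through with the spectator filtrations held fixed, the existence of the limits together with the positivity and multilinearity of mixed multiplicities being precisely those assembled in \cite{Cut}. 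Verifying that this relativisation is legitimate --- that the semigroup/volume comparison is uniform in the fixed spectator data and that no new degeneracy is introduced --- is where I expect the real work to lie.
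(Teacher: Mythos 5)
Your reduction of the theorem to the displayed relative inequality is arithmetically correct: the induction on $s$ closes (the bookkeeping $e(\mathbf{d})^{td}\leq (A^d)^{d_1}(B^d)^{d_2}\leq \left(\prod_i e_i^{d_i}\right)^t$ checks out), and the base case $s\leq 2$ is indeed Theorem $6.3$ of \cite{Cut}. But the proposal is not a proof, because the relative inequality
$$e\left(\mathcal{A};\mathcal{I}(i)^{[d_i]},\mathcal{I}(j)^{[d_j]}\right)^{t}\leq e\left(\mathcal{A};\mathcal{I}(i)^{[t]}\right)^{d_i}e\left(\mathcal{A};\mathcal{I}(j)^{[t]}\right)^{d_j}$$
is never established, and it carries the entire content of the theorem beyond the two-filtration case --- as you yourself concede. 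The gap is not a routine relativisation. For \emph{ideals} this relative inequality is a genuine theorem, but its standard proof goes through Teissier's superficial-element reduction (Theorem \ref{superficial}): one strips off the spectator exponents one at a time, descending to a quotient ring of dimension $t$ where the two-ideal inequality (Theorem \ref{ineq1}) applies. For \emph{filtrations} no superficial-element reduction is available, precisely because the filtrations need not be Noetherian; your proposed substitute --- rerunning the Okounkov-body/Brunn--Minkowski argument of \cite{Cut} with the spectators held fixed --- would require constructing the relevant semigroups and proving existence and log-concavity of the associated volumes relative to $\mathcal{A}$, none of which is sketched beyond the assertion that it ``should go through.''

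It is also worth noting that the route actually taken in the paper sidesteps your lemma entirely and suggests how you would have to fill the gap. The paper reduces to an analytically irreducible domain, replaces each filtration by its $a$-th truncation (which is Noetherian), rescales so that each truncated filtration becomes the filtration of powers of a single ideal, identifies the truncated filtration mixed multiplicities with $\frac{1}{f_a^d}$ times classical ideal mixed multiplicities (by comparing two limit formulas and invoking the Alon--Tarsi polynomial lemma, Lemma \ref{null}), applies the classical $r$-fold ideal inequality (Corollary \ref{ineq3}), and passes to the limit $a\to\infty$ via Proposition $6.2$ of \cite{Cut}. Note that even the classical input, Theorem \ref{ineq2}, is proved by induction on the dimension $d$ with superficial elements, not by induction on the number of ideals. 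If you wanted to salvage your scheme, the most economical proof of your relative inequality for filtrations would be this same truncation-and-rescaling argument applied to the mixed multiplicity with spectators --- at which point your outer induction on $s$ becomes redundant, since the truncation argument proves the full $r$-fold inequality directly.
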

We also provide an application of the above inequality.
\section{Minskowski type inequalities for mixed multiplicities}
We adhere to \cite{Hun} for their notations and definitions regarding mixed multiplicities of ideals. The following theorem appears in Corollary $17.7.3$ of \cite{Hun}.
\begin{theorem}\label{ineq1}
Let $(R,m_R)$ be a Noetherian local ring of dimension $d\geq 1$ and $I$ and $J$ $m_R$-primary ideals. Then for all $0\leq i\leq d$,
$$e\left(I^{[d-i]},J^{[i]}; R\right)^d \leq e(I;R)^{d-i}e(J;R)^i.$$
\end{theorem}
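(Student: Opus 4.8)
The plan is to deduce the stated inequality from the more fundamental \emph{log-convexity} of mixed multiplicities. Set $e_i := e\!\left(I^{[d-i]},J^{[i]};R\right)$ for $0\le i\le d$, so that $e_0 = e(I;R)$ and $e_d = e(J;R)$. I would first reduce everything to the three-term inequality
\begin{equation}
e_i^2 \le e_{i-1}\,e_{i+1},\qquad 1\le i\le d-1. \tag{$\ast$}
\end{equation}
Taking logarithms, $(\ast)$ says precisely that the finite sequence $\bigl(\log e_i\bigr)_{0\le i\le d}$ is convex; a convex sequence lies on or below the chord joining its endpoints, so $\log e_i \le \frac{d-i}{d}\log e_0 + \frac{i}{d}\log e_d$, and exponentiating after multiplying by $d$ yields exactly $e_i^d\le e_0^{d-i}e_d^i$. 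This step is elementary, so the entire content of the theorem is carried by $(\ast)$.

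To prove $(\ast)$ I would reduce the dimension. Replacing $R$ by $R(t)=R[t]_{m_R R[t]}$---a faithfully flat extension preserving dimension, the $m_R$-primary property, and every mixed multiplicity---I may assume the residue field infinite, so that general (superficial) elements exist. Recall that the mixed multiplicities are the suitably normalised leading coefficients of the bigraded Hilbert polynomial $P(m,n)=\ell\bigl(R/I^mJ^n\bigr)$, valid for $m,n\gg 0$. The superficial-element calculus then supplies, for a general element $x$ of $J$ lying in a system of parameters, the reduction $e\!\left(I^{[a]},J^{[b]};R\right)=e\!\left(\overline{I}^{[a]},\overline{J}^{[b-1]};R/xR\right)$ with $\dim R/xR=d-1$, and symmetrically for a general $x\in I$. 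Choosing a single sufficiently general sequence consisting of $d-i-1$ elements of $I$ and $i-1$ elements of $J$, one strips all three terms of $(\ast)$ \emph{simultaneously} down to dimension $2$, reducing $(\ast)$ to its single instance $e(I,J;R')^2\le e(I;R')\,e(J;R')$ in a two-dimensional local ring $R'$.

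The main obstacle is exactly this two-dimensional base case, which carries the genuine geometric content of the theorem (it is the commutative-algebra incarnation of the Hodge index inequality on a surface). Here I would invoke Rees's theory of joint reductions: over the infinite residue field there is a joint reduction $(a,b)$ with $a\in I$, $b\in J$ computing $e(I,J;R')$ as a colength $\ell\bigl(R'/(a,b)\bigr)$ up to the standard correction terms, while superficial pairs compute $e(I;R')$ and $e(J;R')$ as colengths of parameter ideals; the desired inequality then follows from a Cauchy--Schwarz type estimate among these colengths. A recurring subtlety in the dimension reduction is that each superficial-element identity needs the chosen element to avoid the top-dimensional minimal primes. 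In a possibly non-equidimensional ring I would handle this through the associativity formula, expressing every mixed multiplicity as a weighted sum $\sum_{P}\ell_{R_P}(R_P)\,e(\cdots;R/P)$ over the minimal primes $P$ with $\dim R/P=d$, and verifying $(\ast)$ one component at a time. Combining the convexity deduction, the dimension reduction, and the base case then completes the proof.
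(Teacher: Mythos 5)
You should know at the outset that the paper itself contains no proof of Theorem \ref{ineq1}: it is quoted verbatim from Corollary 17.7.3 of \cite{Hun}, so the only meaningful comparison is with the Rees--Sharp argument given in that reference. Your outline reproduces the skeleton of that argument correctly. The reduction of the Minkowski-type inequality to the three-term log-convexity $e_i^2\le e_{i-1}e_{i+1}$ is exactly how Corollary 17.7.3 is deduced from Theorem 17.7.2 in \cite{Hun} (one needs positivity of the $e_i$ to take logarithms, which does hold for $m_R$-primary ideals); the passage to an infinite residue field is the same device the paper uses elsewhere; your superficial-element bookkeeping is sound (the exponent of $I$ in all three terms is at least $d-i-1$ and that of $J$ at least $i-1$, so repeated application of Theorem \ref{superficial} strips all three terms simultaneously down to dimension $2$); and the associativity reduction over top-dimensional minimal primes does recombine correctly, because the three-term inequality is stable under weighted sums by the genuine Cauchy--Schwarz inequality.

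The genuine gap is the two-dimensional base case, which you rightly identify as carrying all the content and then dispose of in two sentences. The proposed final step is circular rather than incomplete: once a joint reduction $(a,b)$ and superficial pairs $(a,a')$, $(b,b')$ are chosen so that (in the quasi-unmixed, say Cohen--Macaulay, situation) $e(I,J;R')=\ell\bigl(R'/(a,b)\bigr)$, $e(I;R')=\ell\bigl(R'/(a,a')\bigr)$, $e(J;R')=\ell\bigl(R'/(b,b')\bigr)$, the ``Cauchy--Schwarz type estimate among these colengths'' you invoke is literally the statement $e(I,J;R')^2\le e(I;R')\,e(J;R')$ being proved; there is no bilinear or inner-product structure on colengths of parameter ideals from which it follows formally. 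This is precisely where the Hodge-index content lives, and every known proof (Rees--Sharp via Rees's degree-function formula $e(I;R'/xR')=\sum_v d_v(I)\,v(x)$; Teissier via the Hodge index theorem in the analytic Cohen--Macaulay case; or convex-geometric arguments in the analytically irreducible case) requires substantive new input at exactly this point. Indeed, even the natural valuation-theoretic identities $e(I)=\sum_v d_v(I)v(I)$, $e(J)=\sum_v d_v(J)v(J)$, $e(I,J)=\sum_v d_v(I)v(J)=\sum_v d_v(J)v(I)$ do not formally imply the inequality from non-negativity of the data alone: with two valuations and $\bigl(d_v(I)\bigr)=(1,0)$, $\bigl(d_v(J)\bigr)=(0,1)$, $\bigl(v(I)\bigr)=(0,1)$, $\bigl(v(J)\bigr)=(1,0)$, all four identities hold with $e(I,J)^2=1>0=e(I)e(J)$. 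Two further caveats: Rees's theorem that joint reductions compute mixed multiplicities requires a formally equidimensional ring, so your associativity reduction must run over the minimal primes of the completion, not of $R'$ itself; and note that the paper's own Theorem \ref{ineq2}, whose proof has the same superficial-element flavour as your reduction, cannot be cited here since it uses Theorem \ref{ineq1} as an input. In short: right architecture, matching the cited source, but the crux of the theorem is asserted rather than proved.
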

The following theorem is a result of Teissier (page $306$ of \cite{BT}). A detailed proof can be found in Theorem $17.4.6.$ and Proposition $17.2.2$ of \cite{Hun}.
\begin{theorem}\label{superficial}
Let $(R,m_R)$ be a Noetherian local ring of dimension $d=\dim R \geq 2$ having infinite residue field and $I_1,\ldots,I_r$ $m$-primary ideals in $R$. There exists an element $f\in I_1\setminus \left(\bigcup\limits_{P\in\mathrm{Min}(R)}P\right)$ such that if we denote the quotient ring $\faktor{R}{(f)}$ by $\Bar{R}$ and the images of the ideals $I_i$ in $\Bar{R}$ by $\Bar{I}_i$ respectively, then for any choice of non-negative integers $d_1,\ldots,d_r$ with $d_1+\cdots+d_r = d$ and $d_1\geq 1$, gives $$e\left(I_1^{[d_1]},\ldots,I_r^{[d_r]};R\right) = e\left(\Bar{I}_1^{[d_1-1]},\Bar{I}_2^{[d_2]},\ldots,\Bar{I}_r^{[d_r]};\Bar{R}\right).$$
\end{theorem}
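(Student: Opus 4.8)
The plan is to realise $f$ as a superficial element for the family $I_1,\dots,I_r$ that also lies outside every minimal prime of $R$, and then to deduce the identity by comparing the leading forms of two Hilbert polynomials. Throughout write $\mathbf n=(n_1,\dots,n_r)$, $I^{\mathbf n}=I_1^{n_1}\cdots I_r^{n_r}$, and $e_1=(1,0,\dots,0)$. Recall that for $\mathbf n\gg 0$ the function $P(\mathbf n)=\ell_R\!\left(R/I^{\mathbf n}\right)$ agrees with a polynomial of degree $d$ whose degree-$d$ homogeneous part is
$$\sum_{|a|=d}\frac{1}{a!}\,e\!\left(I_1^{[a_1]},\dots,I_r^{[a_r]};R\right)\mathbf n^{a},$$
while the mixed multiplicities of the $\bar I_i$ are the top coefficients of $\bar P(\mathbf n)=\ell_{\bar R}\!\left(\bar R/\bar I^{\mathbf n}\right)$, where $\bar I^{\mathbf n}=\bar I_1^{n_1}\cdots\bar I_r^{n_r}$.

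First I would invoke the existence of a superficial element. Since $I_1$ is $m_R$-primary it is contained in no minimal prime, so a prime-avoidance together with a general-element argument — available precisely because the residue field is infinite — yields $f\in I_1$ that is superficial for the family $I_1,\dots,I_r$ and satisfies $f\notin\bigcup_{P\in\mathrm{Min}(R)}P$. The avoidance of minimal primes is what forces $\dim\bar R=d-1$: if $f$ lay in a minimal prime $P$ with $\dim R/P=d$ then $R/(f)$ would surject onto $R/P$ and retain dimension $d$. This is where $d\geq 2$ enters, ensuring $\dim\bar R=d-1\geq 1$ so that $\bar P$ has degree exactly $d-1$.

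The core is a length computation. Multiplication by $f$ gives, for every $\mathbf n$, the short exact sequence
$$0\longrightarrow R/(I^{\mathbf n}:f)\xrightarrow{\ \cdot f\ }R/I^{\mathbf n}\longrightarrow \bar R/\bar I^{\mathbf n}\longrightarrow 0,$$
because $(I^{\mathbf n}:f)$ is the kernel of $R\xrightarrow{\cdot f}R/I^{\mathbf n}$ and $R/(I^{\mathbf n}+fR)=\bar R/\bar I^{\mathbf n}$. The containment $I^{\mathbf n-e_1}\subseteq(I^{\mathbf n}:f)$ is automatic from $f\in I_1$, and superficiality gives the reverse inclusion up to high powers; moreover $(0:_R f)$ has finite length, since $(0:_R f)\cap I^{\mathbf n}=0$ for $\mathbf n\gg 0$ (Krull intersection) and each $R/I^{\mathbf n}$ has finite length. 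Hence for $\mathbf n\gg 0$ with $n_1\geq 1$ the quotient $(I^{\mathbf n}:f)/I^{\mathbf n-e_1}$ is a quotient of $(0:_R f)$ and so has eventually constant length. Feeding this into the sequence yields
$$\bar P(\mathbf n)=P(\mathbf n)-P(\mathbf n-e_1)+c(\mathbf n),$$
with $c(\mathbf n)$ eventually constant, hence of degree $0<d-1$; thus the degree-$(d-1)$ part of $\bar P$ equals that of the first difference $P(\mathbf n)-P(\mathbf n-e_1)$.

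Finally I would match leading forms. The degree-$(d-1)$ part of $P(\mathbf n)-P(\mathbf n-e_1)$ is
$$\sum_{|a|=d,\,a_1\geq 1}\frac{a_1}{a!}\,e\!\left(I_1^{[a_1]},\dots,I_r^{[a_r]};R\right)n_1^{a_1-1}n_2^{a_2}\cdots n_r^{a_r},$$
and substituting $b_1=a_1-1,\ b_i=a_i\ (i\geq 2)$ together with $\tfrac{a_1}{a!}=\tfrac1{b!}$ rewrites it as $\sum_{|b|=d-1}\tfrac1{b!}\,e(I_1^{[b_1+1]},I_2^{[b_2]},\dots,I_r^{[b_r]};R)\,\mathbf n^{b}$. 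Equating its coefficients with those of the leading form $\sum_{|b|=d-1}\tfrac1{b!}\,e(\bar I_1^{[b_1]},\dots,\bar I_r^{[b_r]};\bar R)\,\mathbf n^b$ of $\bar P$ gives $e(\bar I_1^{[b_1]},\dots,\bar I_r^{[b_r]};\bar R)=e(I_1^{[b_1+1]},I_2^{[b_2]},\dots,I_r^{[b_r]};R)$ for every $b$ with $|b|=d-1$; specialising $b_1=d_1-1$ and $b_i=d_i$ yields the theorem. I expect the genuine obstacle to be the colon step of the third paragraph: verifying that the superficial identity $(I^{\mathbf n}:f)=I^{\mathbf n-e_1}$ holds simultaneously for all large $\mathbf n$ in the multigraded sense, and pinning down that the discrepancy is controlled by the finite-length module $(0:_R f)$, so that the correction is of strictly lower order than $d-1$. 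Everything else is bookkeeping with the Hilbert polynomial.
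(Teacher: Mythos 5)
Your overall strategy coincides with the proof the paper is pointing to: the paper does not prove this statement itself but quotes it from Teissier \cite{BT} and delegates the proof to Proposition $17.2.2$ and Theorem $17.4.6$ of \cite{Hun}, and that argument is exactly your plan --- a superficial element $f\in I_1$ avoiding the minimal primes, the exact sequence $0\to R/(I^{\mathbf n}:f)\to R/I^{\mathbf n}\to \bar R/\bar I^{\mathbf n}\to 0$, and the identification of the leading form of $\bar P$ with the first difference $P(\mathbf n)-P(\mathbf n-e_1)$. Your bookkeeping in the last paragraph is correct: the degree-$(d-1)$ part of the first difference is $\partial/\partial n_1$ of the degree-$d$ form, the substitution $b=a-e_1$ with $a_1/a!=1/b!$ is right, and $d\geq 2$ is indeed what makes an eventually constant correction term harmless against a degree-$(d-1)$ leading form.

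There is, however, one step that is genuinely wrong as justified, and it sits exactly where you predicted the obstacle would be. You assert that $(0:_Rf)$ has finite length ``since $(0:_Rf)\cap I^{\mathbf n}=0$ for $\mathbf n\gg 0$ (Krull intersection)''. Krull intersection gives $\bigcap_{\mathbf n}I^{\mathbf n}=0$, but a decreasing chain of submodules with zero intersection need not reach $0$ at a finite stage, and the finiteness actually fails for elements that merely avoid the minimal primes: take $R=k[x,y,z]_{(x,y,z)}/(x^2,xy)$, so $d=2$ with unique minimal prime $(x)$, let $I_1=\cdots=I_r=m_R$ and $f=y$; then $f\notin(x)$, yet $(0:_Rf)=(x)\cong R/(x,y)$ is one-dimensional, $(0:_Rf)\cap m_R^n\neq 0$ for every $n$, and your correction term $c(\mathbf n)$ would grow with degree $d-1$, contaminating precisely the coefficients you want to match. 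What rescues the claim is superficiality itself, which you never invoked for this purpose: the defining property $(I^{\mathbf n}:f)\cap I_1^cI_2^{n_2}\cdots I_r^{n_r}=I^{\mathbf n-e_1}$ for $n_1\geq c$ forces $(0:_Rf)\cap I_1^c\subseteq\bigcap_{n_1\geq c}I_1^{n_1-1}=0$, so $(0:_Rf)$ embeds in $R/I_1^c$ and has finite length (the $f=y$ above is simply not superficial). Relatedly, the deferred colon identity $(I^{\mathbf n}:f)=I^{\mathbf n-e_1}+(0:_Rf)$ for $\mathbf n\gg 0$ is more delicate than in the one-ideal case: the usual Artin--Rees recapture ($fR\cap I^{n}\subseteq fI^{n-k}$, then apply superficiality) does not splice directly in the multigraded setting, since Artin--Rees lowers all the exponents $n_2,\dots,n_r$ by a fixed vector while the superficial property intersects with $I_1^cI_2^{n_2}\cdots I_r^{n_r}$ at the \emph{original} $n_2,\dots,n_r$; closing this is the actual content of Sections $17.2$--$17.4$ of \cite{Hun}. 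So as written your proposal is a faithful outline of the cited proof with one incorrect justification and the hardest lemma left unproved, rather than a complete argument.
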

The next theorem appears in Theorem $1.6.7$ of \cite{Laz} and generalises Theorem \ref{ineq1}. We reproduce their proof here.
\begin{theorem}\label{ineq2}
Let $(R,m_R)$ be a Noetherian local ring of dimension $d=\dim R \geq 1$ and $I_1,\ldots,I_d$ $m_R$-primary ideals in $R$. Then $$e\left(I_1,\ldots,I_d; R\right)^d \leq \prod\limits_{i=1}^d e(I_i; R).$$
\end{theorem}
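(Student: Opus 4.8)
The plan is to argue by induction on $d=\dim R$, feeding the two-ideal inequality (Theorem \ref{ineq1}) into a dimension-reduction supplied by Theorem \ref{superficial}. Two preliminary points should be dealt with first. The base case $d=1$ is trivial, since then $e(I_1;R)^1=e(I_1;R)$. Next, because Theorem \ref{superficial} requires an infinite residue field, I would reduce to that situation by the standard faithfully flat base change $R\rightsquigarrow R(X)=R[X]_{m_RR[X]}$, which preserves the dimension, carries $m_R$-primary ideals to $m_R$-primary ideals, and leaves all Hilbert--Samuel and mixed multiplicities unchanged; it therefore suffices to prove the inequality after this base change, where the residue field is infinite.

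For the inductive step assume $d\geq 2$ and that the theorem holds for every Noetherian local ring of dimension $d-1$. Apply Theorem \ref{superficial} to the family $I_1,\ldots,I_d$ (so $r=d$) to obtain a single superficial element $f\in I_1$ avoiding the minimal primes, and set $\bar R=R/(f)$, a Noetherian local ring of dimension $d-1$ whose residue field is still infinite. The key feature is that this one $f$ computes \emph{every} mixed multiplicity whose first slot carries a positive exponent. Taking the degree sequence $(1,1,\ldots,1)$ gives
$$e(I_1,\ldots,I_d;R)=e(\bar I_2,\ldots,\bar I_d;\bar R),$$
while taking, for each $i\in\{2,\ldots,d\}$, the sequence with $1$ in the first slot, $d-1$ in the $i$-th slot and $0$ elsewhere gives
$$e(\bar I_i;\bar R)=e(\bar I_i^{[d-1]};\bar R)=e\big(I_1^{[1]},I_i^{[d-1]};R\big).$$

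Now I would combine the lower-dimensional induction with the two-ideal inequality. Applying the inductive hypothesis to $\bar R$ and the $d-1$ ideals $\bar I_2,\ldots,\bar I_d$ and substituting the two identities above yields
$$e(I_1,\ldots,I_d;R)^{d-1}\leq\prod_{i=2}^{d}e(\bar I_i;\bar R)=\prod_{i=2}^{d}e\big(I_1^{[1]},I_i^{[d-1]};R\big).$$
For each factor on the right, Theorem \ref{ineq1} (in the roles $I=I_i$, $J=I_1$, index $1$) gives $e\big(I_1^{[1]},I_i^{[d-1]};R\big)^d\leq e(I_i;R)^{d-1}e(I_1;R)$. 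Raising the displayed inequality to the $d$-th power and inserting these bounds collapses the right-hand side, since the $d-1$ stray factors of $e(I_1;R)$ reassemble with the remaining terms into $\prod_{i=1}^{d}e(I_i;R)^{d-1}$, giving
$$e(I_1,\ldots,I_d;R)^{d(d-1)}\leq\Big(\prod_{i=1}^{d}e(I_i;R)\Big)^{d-1};$$
extracting $(d-1)$-th roots produces the claim.

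The step I expect to demand the most care is the bookkeeping around the superficial element: one must be sure that a \emph{single} $f\in I_1$ simultaneously computes the full mixed multiplicity and each auxiliary two-ideal multiplicity $e\big(I_1^{[1]},I_i^{[d-1]};R\big)$. This is exactly what Theorem \ref{superficial} delivers, its conclusion holding uniformly over all degree sequences with $d_1\geq 1$, so no genericity beyond that already built into Theorem \ref{superficial} is required. The remaining work — tracking exponents and taking roots — is routine.
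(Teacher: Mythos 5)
Your proof is correct and follows essentially the same route as the paper: induction on $d$ after reducing to an infinite residue field, a single superficial element from Theorem \ref{superficial} computing both the full mixed multiplicity and the auxiliary two-ideal multiplicities, the inductive hypothesis on $\bar R$, and Theorem \ref{ineq1} applied factor by factor. The only differences are cosmetic — you take $f\in I_1$ where the paper takes $f\in I_d$, and you keep integer exponents (raising to the $d$-th power, then extracting $(d-1)$-th roots) where the paper works with fractional exponents $\tfrac{d-1}{d}$ and raises to the power $\tfrac{d}{d-1}$ at the end.
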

\begin{proof}
We proceed by induction on $d$. The theorem is trivial when $d=1$, hence assume that $d\geq 2$. As mixed multiplicities are preserved under passage to the faithfully flat extension $R[X]_{m_RR[X]}$ where $X$ is a variable over $R$, so without loss of generality we may assume that the residue field of $R$ is infinite. We know from Theorem \ref{superficial} that there exists an element $f\in I_d$ such that if we denote the quotient ring $\faktor{R}{(f)}$ by $\Bar{R}$ and the images of the ideals $I_i$ in $\Bar{R}$ by $\Bar{I}_i$ respectively, then the following holds:
\begin{align}
    e\left(I_1,\ldots,I_d;R\right) &= e\left(\Bar{I}_1,\ldots,\Bar{I}_{d-1};\Bar{R}\right)\label{Eq1}\\
    e\left(I_i^{[d-1]},I_d;R\right) &= e\left(\Bar{I}_i;\Bar{R}\right) &&(\text{for all $1\leq i\leq d-1$}) \label{Eq2}
\end{align}
Now we observe that
\begin{align*}
    e\left(I_1,\ldots,I_d;R\right)^{d-1} &= e\left(\Bar{I}_1,\ldots,\Bar{I}_{d-1};\Bar{R}\right)^{d-1} && (\text{from $(\ref{Eq1})$})\\
    &\leq \prod\limits_{i=1}^{d-1} e\left(\Bar{I}_i;\Bar{R}\right) && (\text{by induction step})\\
    &= \prod\limits_{i=1}^{d-1} e\left(I_i^{[d-1]},I_d;R\right) && (\text{from $(\ref{Eq2})$})\\
    &\leq \prod\limits_{i=1}^{d-1} \left(e\left(I_i;R\right)^{\frac{d-1}{d}}e\left(I_d;R\right)^{\frac{1}{d}}\right) && (\text{from Theorem \ref{ineq1}})\\
    &= \prod\limits_{i=1}^{d} e\left(I_i;R\right)^{\frac{d-1}{d}}.
\end{align*}
So far we have gotten that $$e\left(I_1,\ldots,I_d;R\right)^{d-1} \leq \prod\limits_{i=1}^{d} e\left(I_i;R\right)^{\frac{d-1}{d}}$$ and the desired inequality follows by raising both sides to the exponent $\frac{d}{d-1}$.
\end{proof}
\begin{corollary}\label{ineq3}
Let $(R,m_R)$ be a Noetherian local ring of dimension $d=\dim R \geq 1$ and $I_1,\ldots,I_r$ $m_R$-primary ideals in $R$. Then $$e\left(I_1^{[d_1]},\ldots,I_r^{[d_r]}; R\right)^d \leq \prod\limits_{i=1}^r e(I_i; R)^{d_i}$$ where $d_1,\ldots,d_r\in\mathbb{N}$ such that $d_1+\cdots+d_r = d$.
\end{corollary}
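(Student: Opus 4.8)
The plan is to deduce this immediately from Theorem \ref{ineq2} by unwinding the bracket notation, so no new algebraic input is required. Recall that the symbol $e\left(I_1^{[d_1]},\ldots,I_r^{[d_r]}; R\right)$ denotes the mixed multiplicity of the sequence of length $d = d_1+\cdots+d_r$ in which the ideal $I_1$ is repeated $d_1$ times, $I_2$ is repeated $d_2$ times, and so on through $I_r$ repeated $d_r$ times. Accordingly, I would introduce a single list $J_1,\ldots,J_d$ consisting of exactly this multiset of ideals, each listed with the appropriate multiplicity, so that by the very definition of the bracket notation
$$e\left(I_1^{[d_1]},\ldots,I_r^{[d_r]}; R\right) = e\left(J_1,\ldots,J_d; R\right).$$

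Next I would apply Theorem \ref{ineq2} directly to the $d$-tuple $J_1,\ldots,J_d$, which is legitimate precisely because the number of ideals in the list equals the dimension $d$ of $R$. This yields
$$e\left(J_1,\ldots,J_d; R\right)^d \leq \prod_{k=1}^d e(J_k; R).$$
The final step is bookkeeping on the right-hand side: since $I_i$ occurs exactly $d_i$ times among the $J_k$, the factor $e(I_i; R)$ appears with exponent $d_i$, so that $\prod_{k=1}^d e(J_k; R) = \prod_{i=1}^r e(I_i; R)^{d_i}$. Chaining the three displays gives the claimed inequality.

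I do not expect any genuine obstacle here, as the corollary is a purely formal specialization of Theorem \ref{ineq2}; the only point that needs care is verifying that the bracket notation $\mathbf{[\,\cdot\,]}$ is consistent with simply repeating ideals in the mixed multiplicity symbol, and that the multiplicities sum to $d$ so that Theorem \ref{ineq2} applies verbatim. If one wished to be fully self-contained, one could instead restate the argument in the proof of Theorem \ref{ineq2} with repeated ideals, but the specialization above is cleaner and avoids redoing the induction.
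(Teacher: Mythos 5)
Your proposal is correct and is essentially identical to the paper's own proof: the paper likewise rewrites $e\left(I_1^{[d_1]},\ldots,I_r^{[d_r]};R\right)$ as the mixed multiplicity of the list in which each $I_i$ is repeated $d_i$ times and then applies Theorem \ref{ineq2} directly. The bookkeeping on the right-hand side that you spell out is exactly the step the paper leaves implicit.
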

\begin{proof}
First observe that $$e\left(I_1^{[d_1]},\ldots,I_r^{[d_r]}; R\right) = e\left(\overbrace{I_1,\ldots,I_1}^{d_1},\ldots,\overbrace{I_r,\ldots,I_r}^{d_r};R\right)$$ where each $I_i$ is repeated $d_i$-times. Now our assertion is a direct consequence of Theorem \ref{ineq2} and the above observation.
\end{proof}
We shall also require a combinatorial lemma due to Alon and Tarsi.
\begin{lemma}[Lemma $2.1.$ \cite{Alon}]\label{null}
Let $H = H(x_1,\ldots,x_n)$ be a polynomial in $n$ variables over an arbitrary field $F$. Suppose that the degree of $H$ as a polynomial in $x_i$ is at most $t_i$, for $1\leq i\leq n$, and let $S_i \subset F$ be a set of at least $t_i + 1$ distinct members of $F$. If $H(x_1,\ldots,x_n) = 0$ for all $n$-tuples $\left(x_1,\ldots,x_n\right)\in S_1\times \cdots \times S_n$, then $H\equiv 0$.
\end{lemma}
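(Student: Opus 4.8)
The plan is to prove the lemma by induction on the number of variables $n$, reducing everything to the single most basic fact about polynomials over a field: a nonzero univariate polynomial of degree at most $t$ has at most $t$ distinct roots. For the base case $n=1$, the polynomial $H(x_1)$ has degree at most $t_1$ and vanishes at every point of $S_1$, a set of at least $t_1+1$ distinct elements; were $H$ not identically zero, it would be a nonzero polynomial of degree $\leq t_1$ with more than $t_1$ roots, which is impossible over a field. Hence $H\equiv 0$.

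For the inductive step, assuming the statement for $n-1$ variables, I would regroup $H$ as a polynomial in the single variable $x_n$ with coefficients in $F[x_1,\ldots,x_{n-1}]$, writing
$$H(x_1,\ldots,x_n) = \sum_{k=0}^{t_n} h_k(x_1,\ldots,x_{n-1})\, x_n^k.$$
The key observation here is that, since $H$ has degree at most $t_i$ in $x_i$ for each $i$, every coefficient $h_k$ has degree at most $t_i$ in $x_i$ for $1\leq i \leq n-1$: extracting the coefficient of a fixed power of $x_n$ cannot raise the degree in the remaining variables.

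Now I would fix an arbitrary tuple $(a_1,\ldots,a_{n-1})\in S_1\times\cdots\times S_{n-1}$ and specialize. Viewed as a polynomial in $x_n$ alone, $H(a_1,\ldots,a_{n-1},x_n)$ has degree at most $t_n$ and vanishes at every point of $S_n$, a set of at least $t_n+1$ elements, so by the one-variable case each of its coefficients vanishes, i.e. $h_k(a_1,\ldots,a_{n-1})=0$ for all $k$. Since the tuple was arbitrary, each $h_k$ vanishes on all of $S_1\times\cdots\times S_{n-1}$. Applying the induction hypothesis to each $h_k$, with the same sets $S_1,\ldots,S_{n-1}$ and degree bounds $t_1,\ldots,t_{n-1}$, yields $h_k\equiv 0$ for every $k$, and therefore $H\equiv 0$.

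The argument is almost entirely formal, so the only point requiring genuine care is the bookkeeping on degrees, namely verifying that the coefficient polynomials $h_k$ inherit the per-variable degree bounds so that the induction hypothesis legitimately applies. Everything else rests on the single-variable root bound, invoked once in the base case and once inside the inductive step.
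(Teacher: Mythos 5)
Your proof is correct, and the degree bookkeeping you flag as the one delicate point is handled properly: extracting the coefficient of $x_n^k$ indeed cannot increase the degree in $x_1,\ldots,x_{n-1}$, so the induction hypothesis legitimately applies to each $h_k$. The paper itself offers no proof of this lemma --- it simply cites Lemma $2.1$ of \cite{Alon} --- and your induction on $n$, resting on the univariate root bound in the base case and again after specializing at each tuple of $S_1\times\cdots\times S_{n-1}$, is precisely the standard argument given in that reference.
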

We borrow the notations and definitions from \cite{Cut} for the remainder of this article. We apply the techniques from \cite{Cut} to produce a filtration analogue of Corollary \ref{ineq3}, thereby generalising the third inequality given in Theorem $6.3.$ of \cite{Cut}.
\begin{theorem}\label{mainthm}
Suppose that $(R,m_R)$ is a Noetherian local ring of dimension $d\geq 1$ such that $\dim N(\hat{R})<d$, $M$ is a finitely generated $R$-module and $\mathcal{I}(1) = \{I(1)_j\}, \ldots, \mathcal{I}(r) = \{I(r)_j\}$ are filtrations of $R$ by $m_R$-primary ideals. Then $$e\left(\mathcal{I}(1)^{[d_1]},\ldots,\mathcal{I}(r)^{[d_r]};M\right)^d \leq \prod\limits_{i=1}^r e\left(\mathcal{I}(i);M\right)^{d_i}$$ for any $d_1,\ldots,d_r \in\mathbb{N}$ satisfying $d_1+\cdots+d_r=d$. 
\end{theorem}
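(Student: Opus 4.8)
The plan is to reduce the filtration inequality to its ideal counterpart, Corollary \ref{ineq3}, by approximating each filtration by its constituent ideals $I(i)_n$ and then passing to a limit; the combinatorial input of Lemma \ref{null} will be what lets me separate the individual mixed multiplicities from the symmetric data that the limits naturally provide. First I would reduce to the case of an infinite residue field exactly as in the proof of Theorem \ref{ineq2}: replacing $R$ by the faithfully flat extension $R[X]_{m_RR[X]}$ alters neither the hypothesis $\dim N(\hat R)<d$ nor any of the mixed multiplicities in the statement, so no generality is lost.

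The central step is to realise every filtration mixed multiplicity as a limit of honest mixed multiplicities of the truncations. For each fixed $n$ the ideals $I(1)_n,\ldots,I(r)_n$ are $m_R$-primary, and the classical theory gives the polynomial identity
$$e\left(I(1)_n^{a_1}\cdots I(r)_n^{a_r};M\right)=\sum_{d_1+\cdots+d_r=d}\binom{d}{d_1,\ldots,d_r}e\left(I(1)_n^{[d_1]},\ldots,I(r)_n^{[d_r]};M\right)a_1^{d_1}\cdots a_r^{d_r}$$
in the variables $a_1,\ldots,a_r$, a polynomial of degree at most $d$ in each variable. Dividing by $n^d$, the left-hand side converges as $n\to\infty$ to the multiplicity $e(\mathcal I(\vec a);M)$ of the product filtration $\mathcal I(\vec a)=\{I(1)_{a_1k}\cdots I(r)_{a_rk}\}_k$, this being the multi-ideal product form of the single-filtration limit $e(\mathcal I;M)=\lim_n e(I_n;M)/n^d$ supplied by \cite{Cut}. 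Thus the rescaled polynomials on the right take convergent values at every point of a grid $S_1\times\cdots\times S_r$ with each $|S_i|\ge d+1$. Here Lemma \ref{null} enters: a polynomial of degree at most $d$ in each variable is determined by its values on such a grid through a fixed invertible interpolation, so convergence of the grid values forces convergence of each coefficient. Consequently $\lim_{n\to\infty}e(I(1)_n^{[d_1]},\ldots,I(r)_n^{[d_r]};M)/n^d$ exists for every $\vec d$ and equals the filtration mixed multiplicity $e(\mathcal I(1)^{[d_1]},\ldots,\mathcal I(r)^{[d_r]};M)$, while in particular $e(I(i)_n;M)/n^d\to e(\mathcal I(i);M)$.

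With these identifications the conclusion is immediate. For each fixed $n$, Corollary \ref{ineq3} applied to the $m_R$-primary ideals $I(1)_n,\ldots,I(r)_n$ gives
$$e\left(I(1)_n^{[d_1]},\ldots,I(r)_n^{[d_r]};M\right)^d\le\prod_{i=1}^r e(I(i)_n;M)^{d_i}.$$
Dividing both sides by $n^{d^2}=\prod_{i=1}^r n^{dd_i}$ and letting $n\to\infty$, the left side tends to $e(\mathcal I(1)^{[d_1]},\ldots,\mathcal I(r)^{[d_r]};M)^d$ and the right side tends to $\prod_{i=1}^r e(\mathcal I(i);M)^{d_i}$, which is the asserted inequality.

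The main obstacle I anticipate is the middle step. The product-filtration limit from \cite{Cut} only controls the symmetric combinations $e(\mathcal I(\vec a);M)$, that is, the values of the interpolating polynomials; isolating each coefficient $\lim_n e(I(1)_n^{[d_1]},\ldots,I(r)_n^{[d_r]};M)/n^d$ is precisely where the rigidity of Lemma \ref{null} is needed, and I must take care that the limiting polynomial is genuinely the mixed multiplicity polynomial of the filtrations rather than merely some polynomial agreeing with it on the grid. The other delicate point is checking that the hypothesis $\dim N(\hat R)<d$ is exactly what guarantees the existence of all the limits feeding into the interpolation, so that the argument has convergent inputs at every grid point.
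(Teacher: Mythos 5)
Your overall architecture --- approximate each filtration by ideals, apply Corollary \ref{ineq3} at each finite stage, and pass to a limit, using grid interpolation (Lemma \ref{null}) to move between symmetric data and individual mixed multiplicities --- is the same skeleton as the paper's proof. But the middle step, which you yourself flag as the crux, has a genuine gap that your argument does not close. You need the convergence
$$\lim_{n\to\infty}\frac{e\left(I(1)_n^{a_1}\cdots I(r)_n^{a_r};M\right)}{n^d}=e\left(\mathcal{I}(\vec{a});M\right),\qquad \mathcal{I}(\vec{a})=\left\{I(1)_{a_1k}\cdots I(r)_{a_rk}\right\}_k,$$
and you justify it as ``the multi-ideal product form'' of the volume-equals-multiplicity limit $e(\mathcal{I};M)=\lim_n e(I_n;M)/n^d$. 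This does not follow formally: volume-equals-multiplicity applied to the graded family $\mathcal{I}(\vec{a})$ controls $e\left(I(1)_{a_1k}\cdots I(r)_{a_rk};M\right)/k^d$, i.e.\ multiplicities of products of the filtration ideals at \emph{scaled indices}, whereas your interpolation scheme needs products of \emph{powers} $I(i)_k^{a_i}$ of fixed-index ideals. Since the filtrations are not assumed Noetherian, one only has the inclusion $I(i)_k^{a_i}\subseteq I(i)_{a_ik}$, which yields the one-sided bound $e\left(\prod_i I(i)_k^{a_i};M\right)\geq e\left(\prod_i I(i)_{a_ik};M\right)$; the reverse asymptotic comparison is exactly the non-Noetherian difficulty, and nothing in your proposal (or in the single-filtration statement you cite) supplies it.

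This is precisely where the paper does its real work, and its mechanism is worth contrasting with yours. The paper first reduces to $R$ an analytically irreducible domain and $M=R$ via the reduction in Theorem $6.1$ of \cite{Cut} (note also that Corollary \ref{ineq3} is stated only for $R$, so your application of it to a module $M$ silently needs this reduction too, not merely the passage to an infinite residue field). It then replaces the filtrations by their $a$-th truncations, which are Noetherian; by a result of Bourbaki there exists $f_a$ with $I_a(i)_{f_at}=\left(I_a(i)_{f_a}\right)^t$ for all $t$, so that scaled values literally coincide with powers and the gap above disappears. Lemma \ref{null} is then used to prove an \emph{exact} identity (not a limit) between the truncated filtration mixed multiplicities and the classical mixed multiplicities of the single ideals $I_a(i)_{f_a}$, Corollary \ref{ineq3} gives the inequality for each truncation, and finally Proposition $6.2$ of \cite{Cut} (continuity of mixed multiplicities under truncation, letting $a\to\infty$) yields the theorem. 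To rescue your route you would need a genuine ``mixed volume equals mixed multiplicity'' theorem, namely $e\left(\mathcal{I}(1)^{[d_1]},\ldots,\mathcal{I}(r)^{[d_r]};M\right)=\lim_n e\left(I(1)_n^{[d_1]},\ldots,I(r)_n^{[d_r]};M\right)/n^d$, which is not contained in the sources you invoke and whose proof essentially requires the truncation argument (or equivalent Okounkov-body machinery) anyway.
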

\begin{proof}
In view of the reduction arguments applied in the beginning of the proof of Theorem $6.1.$ of \cite{Cut}, it suffices to prove the theorem for $R$ an analytically irreducible local domain and $M=R$. We shall first show that for all $a\in \mathbb{Z}_{\geq 1}$ the inequality holds for the $a$-th truncated filtrations $\mathcal{I}_a(1) = \{I_a(1)_t\}, \ldots, \mathcal{I}_a(r) = \{I_a(r)_t\}$ as defined in Definition $4.1.$ of \cite{Cut}. As the truncated filtrations are Noetherian, given $a\in \mathbb{Z}_{\geq 1}$, there exists $f_a\in \mathbb{Z}_{\geq 1}$ such that $I_a(i)_{f_at} = \left(I_a(i)_{f_a}\right)^t$ for all $t\geq 0$ and $1\leq i\leq r$ (see \cite{Bourbaki}[Proposition $3$, Section $1.3$, Chapter III]). Define filtrations of $R$ by $m_R$-primary ideals by $J_a(i)_t = I_a(i)_{f_at}$. From Theorem $17.4.2.$ of \cite{Hun}, we get that for $n_1,\ldots,n_r \in \mathbb{Z}_{\geq 1}$
\begin{multline*}
    \lim\limits_{t\to\infty}\dfrac{l_R\left(\faktor{R}{J_a(1)_{tn_1}\cdots J_a(r)_{tn_r}}\right)}{t^d} =\\ \sum_{\substack{d_1+\cdots + d_r = d\\ d_i\in\mathbb{N}}}\dfrac{1}{d_1!\ldots d_r!}e\left(J_a(1)_1^{[d_1]},\ldots, J_a(r)_1^{[d_r]};R\right)n_1^{d_1}\cdots n_r^{d_r}
\end{multline*}
\begin{align*}
    \lim\limits_{t\to\infty}\dfrac{l_R\left(\faktor{R}{J_a(i)_1^t}\right)}{t^d} &= \dfrac{1}{d!}e_R\left(J_a(i)_1;R\right) \quad (\text{for all $1\leq i\leq r$})
\end{align*}
where $e\left(J_a(1)_1^{[d_1]},\ldots, J_a(r)_1^{[d_r]};R\right)$ with $d_1+\cdots + d_r = d$ and $e_R\left(J_a(i)_1;R\right)$ for $1\leq i\leq r$, are the usual mixed multiplicities of ideals. Now by Corollary \ref{ineq3} we have that
\begin{align*}
    e\left(J_a(1)_1^{[d_1]},\ldots, J_a(r)_1^{[d_r]};R\right) &\leq \prod\limits_{i=1}^r e_R\left(J_a(i)_1;R\right)^{d_i}.
\end{align*}
From Theorem $6.1.$ of \cite{Cut}, we get that for $n_1,\ldots,n_r \in \mathbb{Z}_{\geq 1}$
\begin{multline*}
    \lim\limits_{t\to\infty}\dfrac{l_R\left(\faktor{R}{I_a(1)_{tn_1}\cdots I_a(r)_{tn_r}}\right)}{t^d} =\\ \sum_{\substack{d_1+\cdots + d_r = d\\ d_i\in\mathbb{N}}}\dfrac{1}{d_1!\ldots d_r!}e\left(\mathcal{I}_a(1)^{[d_1]},\ldots, \mathcal{I}_a(r)^{[d_r]};R\right)n_1^{d_1}\cdots n_r^{d_r}
\end{multline*}
\begin{align*}
    \lim\limits_{t\to\infty}\dfrac{l_R\left(\faktor{R}{I_a(i)_t}\right)}{t^d} &= \dfrac{1}{d!}e_R\left(\mathcal{I}_a(i);R\right) \quad (\text{for all $1\leq i\leq r$})
\end{align*}
where $e\left(\mathcal{I}_a(1)^{[d_1]},\ldots, \mathcal{I}_a(r)^{[d_r]};R\right)$ with $d_1+\cdots + d_r = d$ and $e_R\left(\mathcal{I}_a(i);R\right)$ for $1\leq i\leq r$, are the mixed multiplicities of filtrations. By Lemma $3.3.$ of \cite{Cut},
\begin{align*}
    \lim\limits_{t\to\infty}\dfrac{l_R\left(\faktor{R}{I_a(1)_{tn_1}\cdots I_a(r)_{tn_r}}\right)}{t^d} &= \dfrac{1}{f_a^d}\left(\lim\limits_{t\to\infty}\dfrac{l_R\left(\faktor{R}{J_a(1)_1^{tn_1}\cdots J_a(r)_1^{tn_r}}\right)}{t^d}\right)\\
    \lim\limits_{t\to\infty}\dfrac{l_R\left(\faktor{R}{I_a(i)_t}\right)}{t^d} &= \dfrac{1}{f_a^d} \lim\limits_{t\to\infty}\dfrac{l_R\left(\faktor{R}{J_a(i)_1^t}\right)}{t^d}
\end{align*}
for all $n_1,\ldots,n_r \in \mathbb{N}$ and $1\leq i\leq r$. Define the polynomial $H(x_1,\ldots,x_r)$ by 
\begin{multline*}
    H(x_1,\ldots,x_r) = \\ \sum_{\substack{d_1+\cdots + d_r = d\\ d_i\in\mathbb{N}}}\dfrac{1}{d_1!\ldots d_r!}\Big(e\left(\mathcal{I}_a(1)^{[d_1]},\ldots, \mathcal{I}_a(r)^{[d_r]};R\right)\\- \dfrac{1}{f_a^d}e\big(J_a(1)_1^{[d_1]},\ldots, J_a(r)_1^{[d_r]};R\big)\Big)x_1^{d_1}\cdots x_r^{d_r}.
\end{multline*}
Note that $H(x_1,\ldots,x_r)$ is a homogeneous polynomial in $x_1,\ldots,x_r$  with rational coefficients of total degree $d$ if $H\not\equiv 0$. It follows from the above identities that $H(n_1,\ldots,n_r)=0$ for all $n_1,\ldots,n_r \in\mathbb{Z}_{\geq 1}$. By using Lemma \ref{null} it follows that $H(x_1,\ldots,x_r)$ is identically zero. Hence
\begin{align*}
  e\left(\mathcal{I}_a(1)^{[d_1]},\ldots,\mathcal{I}_a(r)^{[d_r]};R\right) &= \dfrac{1}{f_a^d}e\left(J_a(1)_1^{[d_1]},\ldots, J_a(r)_1^{[d_r]};R\right)\\
  e\left(\mathcal{I}_a(i);R\right) &= \dfrac{1}{f_a^d} e\left(J_a(i)_1;R\right)
\end{align*}
 for all $d_1+\cdots+d_r = d$ and $1\leq i\leq r$. Therefore $$e\left(\mathcal{I}_a(1)^{[d_1]},\ldots, \mathcal{I}_a(r)^{[d_r]};R\right)^d \leq \prod\limits_{i=1}^r e_R\left(\mathcal{I}_a(i);R\right)^{d_i}$$ for all $a\in \mathbb{Z}_{\geq 1}$. Finally by Proposition $6.2.$ of \cite{Cut}, $$e\left(\mathcal{I}(1)^{[d_1]},\ldots, \mathcal{I}(r)^{[d_r]};R\right)^d \leq \prod\limits_{i=1}^r e_R\left(\mathcal{I}(i);R\right)^{d_i}.$$
\end{proof}
\section{Applications}
We obtain a generalisation of Corollary $1.7.$ of \cite{CSV} as a direct consequence of Theorem \ref{mainthm}.
\begin{corollary}
Suppose that $(R,m_R)$ is a Noetherian local ring of dimension $d\geq 1$ such that $\dim N(\hat{R})<d$, $M$ is a finitely generated $R$-module and $\mathcal{I}(1) = \{I(1)_j\}, \ldots, \mathcal{I}(r) = \{I(r)_j\}$ are filtrations of $R$ by $m_R$-primary ideals. Further assume that $e\left(\mathcal{I}(j);M\right) = 0$ for some $j$, $1\leq j\leq r$. Then $$e\left(\mathcal{I}(1)^{[d_1]},\ldots,\mathcal{I}(r)^{[d_r]};M\right) = 0$$ for any $d_1,\ldots,d_r \in\mathbb{N}$ satisfying $d_1+\cdots+d_r=d$ and $d_j>0$.
\end{corollary}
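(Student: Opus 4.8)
The plan is to deduce this directly from Theorem \ref{mainthm}, so the argument will be very short. Since the hypotheses imposed on $R$, on $M$, and on the filtrations $\mathcal{I}(1),\ldots,\mathcal{I}(r)$ coincide exactly with those of Theorem \ref{mainthm}, I would first apply that theorem with the given exponents $d_1,\ldots,d_r$ to obtain the Minkowski-type inequality
$$e\left(\mathcal{I}(1)^{[d_1]},\ldots,\mathcal{I}(r)^{[d_r]};M\right)^d \leq \prod\limits_{i=1}^r e\left(\mathcal{I}(i);M\right)^{d_i}.$$

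Next I would inspect the right-hand side. By hypothesis $e(\mathcal{I}(j);M)=0$, and because $d_j>0$, that is $d_j\geq 1$, the corresponding factor $e(\mathcal{I}(j);M)^{d_j}$ equals $0$; hence the entire product vanishes. This already yields $e\left(\mathcal{I}(1)^{[d_1]},\ldots,\mathcal{I}(r)^{[d_r]};M\right)^d \leq 0$.

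Finally I would invoke the non-negativity of mixed multiplicities of filtrations, which holds since these quantities arise as limits of lengths divided by $t^d$ (as established in \cite{Cut}); thus the left-hand quantity raised to the $d$-th power is also $\geq 0$. Combining the two bounds forces $e\left(\mathcal{I}(1)^{[d_1]},\ldots,\mathcal{I}(r)^{[d_r]};M\right)^d = 0$, and therefore the mixed multiplicity itself is $0$, which is the assertion.

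There is essentially no obstacle in this deduction; the only point deserving attention is the role of the hypothesis $d_j>0$. Were $d_j$ permitted to be $0$, the offending factor would be $e(\mathcal{I}(j);M)^0=1$ rather than $0$, and the vanishing of the single multiplicity $e(\mathcal{I}(j);M)$ would place no constraint on the mixed multiplicity. Thus $d_j>0$ is precisely the condition that allows the hypothesis $e(\mathcal{I}(j);M)=0$ to propagate through the inequality of Theorem \ref{mainthm}.
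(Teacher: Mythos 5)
Your proof is correct and follows essentially the same route as the paper: a direct application of Theorem \ref{mainthm}, observing that $d_j>0$ makes the right-hand side vanish, and invoking non-negativity of mixed multiplicities to conclude. The only cosmetic difference is that you work with the $d$-th power of the inequality while the paper first takes $d$-th roots; you are in fact slightly more careful, since you make explicit the non-negativity step that the paper leaves implicit in its ``follows immediately.''
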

\begin{proof}
From Theorem \ref{mainthm}, we get that $$e\left(\mathcal{I}(1)^{[d_1]},\ldots,\mathcal{I}(r)^{[d_r]};M\right) \leq \prod\limits_{i=1}^r e\left(\mathcal{I}(i);M\right)^{\frac{d_i}{d}}$$ for any $d_1,\ldots,d_r \in\mathbb{N}$ satisfying $d_1+\cdots+d_r=d$. The conclusion now follows immediately from the above inequality.
\end{proof}
\bibliographystyle{plain}
\bibliography{main}
\end{document}